\newtheorem{theorem}{Theorem}[section]
\newtheorem{corollary}[theorem]{Corollary}
\newtheorem{lemma}[theorem]{Lemma}
\newtheorem{proposition}[theorem]{Proposition}
\theoremstyle{definition}
\newtheorem{question}[theorem]{Question}
\newtheorem{definition}[theorem]{Definition}
\newcommand{\conv}{\mathop{\rm{conv}}}
\newcommand{\convSet}[1]{\mathop{\rm{conv}}\{#1\}}
\newcommand{\intr}{\mathop{\rm{int}}}
\newcommand{\R}{\mathbb{R}}
\newcommand{\Z}{\mathbb{Z}}
\newcommand{\calE}{\mathcal{E}}
\newcommand{\calM}{\mathcal{M}}
\begin{document}

\title[Lattice polytopes without interior lattice points] 
{Projecting lattice polytopes without interior lattice points}

\author{Benjamin Nill}

\address[Benjamin Nill]{
   Department of Mathematics, University of Georgia, Athens, GA 30602, USA
}
\email{bnill@math.uga.edu}

\author{G\"unter M. Ziegler}

\address[G\"unter M. Ziegler]{
   Inst.\ Mathematics, FU Berlin, Arnimallee 2, 14195 Berlin, Germany
}
\email{ziegler@math.fu-berlin.de}

\begin{abstract}
We show that up to unimodular equivalence 
there are only finitely many $d$-dimen\-sional lattice polytopes without interior lattice points 
that do not admit a lattice projection onto a $(d-1)$-dimensional lattice polytope without interior lattice points. 
This was conjectured by Treutlein. As an immediate corollary, we get a short proof of a recent result of Averkov, Wagner \& Weismantel, namely 
the finiteness of the number of maximal lattice polytopes without interior lattice points. Moreover, we show that in dimension four and higher 
some of these finitely many polytopes are not maximal as convex bodies without interior lattice points.
\end{abstract}

\maketitle

\section{Introduction and main results}

\subsection{Notation}

A \emph{lattice polytope} $P \subset \R^d$ is the convex hull of finitely many lattice points (in~$\Z^d$). 
We identify two lattice polytopes if they are \emph{unimodularly equivalent}, i.e., there is 
a lattice-preserving affine isomorphism mapping them onto each other. All lattice polytopes considered 
here will be \emph{$d$-polytopes}, i.e., full-dimensional, $\dim(P) = d$. The volume of a lattice polytope 
is always taken with respect to the given lattice (here, $\Z^d$), i.e., the volume of a fundamental parallepiped $[0,1]^d$ is equal to $1$.

In the geometry of numbers, convex bodies (compact convex sets) without lattice points are often called \emph{lattice-free}. 
Sometimes, this terminology is extended to convex bodies without \emph{interior} lattice points \cite{AWW10}. 
However, in the literature the term ``lattice-free polytopes'' had already been used to denote \emph{empty} lattice polytopes \cite{BK00} \cite{DO95} \cite{Kan99}, i.e., where 
the only lattice points are the vertices. In order to avoid these ambiguities we use in this note the following definition.

\begin{definition}
	A convex body $P \subset \R^d$ is \emph{hollow} if it does not contain any lattice points in its interior.
\end{definition}

A $0$-dimensional lattice polytope (a lattice point) is considered to be hollow. Important examples of hollow lattice polytopes are 
\emph{Cayley polytopes} (lattice polytopes of lattice width one), i.e., lattice polytopes whose vertices lie on two 
adjacent lattice hyperplanes, i.e.\ on two hyperplanes spanned by lattice points with no lattice points strictly between. 
Let $P \subset \R^d$ be a lattice polytope. We denote by a {\em lattice projection} a surjective affine-linear map 
$\phi$ from $\R^d$ to a vector space $V$ of dimension $m$ such that the kernel of $\phi$ is 
affinely generated by lattice points, or equivalently, such that $\phi(\R^d)$ is a lattice of rank $m$. By choosing 
an isomorphism $\phi(\R^d) \cong \Z^m$ we can identify $V \cong \R^m$, so $\phi(P) \subset \R^m$ is a lattice polytope. 
For instance, Cayley polytopes are precisely those lattice polytopes admitting a lattice projection onto $[0,1]$.

\subsection{The main theorem}
Lattice polytopes without interior points are of interest in geometry of numbers, optimization and 
Ehrhart theory, e.g., \cite{Sca85} \cite{Seb99} \cite{Bar02} \cite{BN07} \cite{AWW10}. The only hollow $1$-polytope is $[0,1]$. 
Hollow polygons are either isomorphic to $\convSet{(0,0),(2,0),(0,2)}$ or Cayley polytopes, see e.g. \cite{Rab89}. In dimension three, 
so far no complete classification exists, however recently some significant progress was made \cite{Tre08} \cite{Tre10} \cite{AWW09} \cite{AWW10}. 
A famous theorem by Howe \cite{Sca85} states that empty lattice $3$-polytopes are Cayley polytopes. 
In \cite{Tre08} Treutlein used this result to prove 
that hollow $3$-polytopes are either Cayley polytopes, admit a lattice projection onto $\convSet{(0,0),(2,0),(0,2)}$ 
or belong to a finite set of (not yet completely determined) exceptions. 
He conjectured that such a result should hold in any dimension. The 
confirmation of this conjecture is the main result of this paper.

\begin{theorem}Any hollow lattice $d$-polytope $P$ admits a lattice projection 
onto a hollow lattice $(d-1)$-polytope, except if $P$ belongs to one of finitely many exceptions.
\label{main}
\end{theorem}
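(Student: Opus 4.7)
The plan is to induct on $d$, combining two classical ingredients: the Kannan--Lov\'asz flatness theorem (every hollow convex $d$-body has lattice width bounded by some constant $w_d$) and Lagarias--Ziegler finiteness (lattice $d$-polytopes of bounded volume fall into only finitely many unimodular equivalence classes). The base case $d=2$ is the classical classification of hollow polygons: every Cayley polygon projects onto $[0,1]$, leaving $\convSet{(0,0),(2,0),(0,2)}$ as the single exception.

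For the induction step, apply the flatness theorem to bring $P$, after unimodular equivalence, into the slab $\R^{d-1}\times[0,w]$ with $w=w(P)\in\{1,\ldots,w_d\}$. If $w=1$, then $P$ is a Cayley polytope, and projection along any of the first $d-1$ coordinate directions produces a $(d-1)$-dimensional Cayley polytope, which has lattice width $1$ and hence is hollow; so no exceptions arise in this case. Henceforth assume $w\ge 2$ and consider the inner slices $P_k=P\cap\{x_d=k\}$ for $k=1,\ldots,w-1$. These are rational $(d-1)$-polytopes whose relative interiors contain no lattice points, since any such point would be interior to $P$. The reformulation to exploit is this: for a primitive $v'\in\Z^{d-1}$, the projection $\pi_{(v',0)}(P)\subset\R^{d-1}$ is hollow if and only if each slice projection $\pi_{v'}(P_k)\subset\R^{d-2}$ is hollow. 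Thus the theorem for $P$ reduces to producing, for all but finitely many $P$, a single primitive $v'\in\Z^{d-1}$ that simultaneously flattens every inner slice.

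To find such a common $v'$, I would pass to the lattice hulls $Q_k=\conv(P_k\cap\Z^{d-1})$, each a hollow lattice polytope in $\R^{d-1}$ of dimension at most $d-1$, and apply the induction hypothesis. Either every $Q_k$ admits a hollow $(d-2)$-projection, in which case the witness directions lie in finitely many candidates (their number controlled by flatness applied to $Q_k$) and a pigeonhole / common-refinement argument should yield a common $v'$; or some $Q_k$ belongs to the finite exceptional list $\calE_{d-1}$ or has dimension strictly less than $d-1$. The degenerate situations force all lattice points of $P$ to lie on a bounded number of horizontal lattice hyperplanes, which, together with the width bound $w\le w_d$, gives a uniform bound on the number of lattice points and hence, by standard Ehrhart-type estimates, on the volume of $P$. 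Lagarias--Ziegler finiteness then produces the finitely many exceptions.

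The technical crux I expect to be the main obstacle is the incompatibility case: converting the failure to find a common projection direction into an effective volume bound on $P$, since the witness directions for different slices may genuinely disagree and ruling out such obstructions appears to require careful combinatorial bookkeeping together with some use of Howe-type results on lattice structure in low codimension.
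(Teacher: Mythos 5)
Your strategy (induct on $d$, flatten $P$ into a slab of width $w\le w_d$, and analyze the inner slices) is not the paper's route, and as sketched it breaks down at exactly the points you defer. The most serious gap is the passage from the rational slices $P_k$ to their lattice hulls $Q_k=\conv(P_k\cap\Z^{d-1})$: hollowness of $\pi_{(v',0)}(P)$ requires that the relative interior of $\pi_{v'}(P_k)$ contain no lattice points, but such a lattice point need not be the image of any lattice point of $P_k$ and hence need not lie in $\pi_{v'}(Q_k)$ at all (e.g.\ $P_k$ may be a long thin sliver with $P_k\cap\Z^{d-1}=\emptyset$ whose projection is a large polytope full of lattice points). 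So a direction that flattens every $Q_k$ does not flatten $P$. Second, the ``pigeonhole / common-refinement argument'' producing one $v'$ that works for all inner slices simultaneously is the actual crux, and you give no mechanism for it: the induction hypothesis hands you one unrelated witness direction per slice, and nothing in your setup rules out genuinely incompatible directions while $P$ has arbitrarily large volume. Third, in the degenerate case a single exceptional or lower-dimensional $Q_k$ constrains only level $k$; the lattice points of $P$ at the remaining levels, including the boundary levels $x_d=0$ and $x_d=w$ (which can carry huge hollow $(d-1)$-polytopes), are unconstrained, so no volume bound on $P$ follows from ``Ehrhart-type estimates.''

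The paper avoids all of this by neither inducting nor slicing. It applies the Kannan--Lov\'asz covering-minima theorem \cite[Cor.~(3.8)]{KL88}, which in a single step produces a lattice projection $\pi$ onto some dimension $d-i$ such that either $i=0$ and the volume of $P$ is bounded by a function of $d$ alone (whence finiteness by Lagarias--Ziegler \cite{LZ91}), or every interior lattice point of $\pi(P)$ would have to be badly non-central ($\delta$-central only for $\delta\le\Delta_i$). Pikhurko's theorem \cite{Pik01} supplies the complementary quantitative fact that a lattice polytope with interior lattice points always has one that is $\delta$-central for a $\delta$ depending only on its dimension; choosing the thresholds $\Delta_i$ accordingly forces $\pi(P)$ to be hollow, and factoring $\pi$ through an intermediate $(d-1)$-dimensional projection finishes the proof. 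To salvage your approach you would essentially have to reprove these two quantitative centrality statements; I recommend restructuring the argument around a single global projection rather than slicewise ones.
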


The proof can be found in Section~\ref{proof}. It 
follows from combining the main results in two fundamental papers in the geometry of numbers: 
\cite{KL88} by Kannan and Lov\'asz, and \cite{Pik01} by Pikhurko. Roughly speaking, Kannan and Lov\'asz showed 
that if the volume of a hollow convex body is large enough, then it can be projected onto a lower-dimensional convex body where 
any lattice point lies arbitrarily close to the boundary. On the other hand, Pikhurko proved that such a lattice polytope has to be hollow.

\subsection{Maximal hollow lattice polytopes}

We say that a hollow lattice polytope is \emph{maximal hollow} if it is not properly contained in a larger hollow lattice polytope. 
As a corollary of Theorem~\ref{main} we get a finiteness result, which was recently 
proven by Averkov, Wagner \& Weismantel \cite[Theorem 2.1]{AWW10}:

\begin{corollary}
There are only finitely many maximal hollow lattice $d$-polytopes. Moreover, if a hollow lattice polytope 
is one of the exceptions in Theorem~\ref{main}, then it is contained in a maximal hollow lattice polytope.
\label{max}
\end{corollary}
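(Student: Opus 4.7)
The plan is to deduce Corollary~\ref{max} from Theorem~\ref{main} in two steps: first, to argue that any hollow $d$-polytope admitting a lattice projection onto a hollow $(d-1)$-polytope can be strictly enlarged inside a natural hollow cylinder and is therefore not maximal hollow; second, to argue that an ascending chain of hollow polytopes starting at one of the finitely many exceptions must terminate.

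For the first step I would take $P$ hollow with a lattice projection $\phi \colon \R^d \to \R^{d-1}$ onto the hollow polytope $Q := \phi(P)$. The preimage $C := \phi^{-1}(Q)$ is itself hollow, since any interior lattice point of $C$ would project to an interior lattice point of $Q$; and $C$ is unbounded, since $\ker \phi$ is a one-dimensional subspace containing a primitive lattice vector (this is precisely what a lattice projection provides). Because $P \subset C$ is bounded, a sufficiently high lattice translate $v$ of any vertex of $P$ along $\ker \phi$ lies in $C \setminus P$, and $\conv(P \cup \{v\}) \subset C$ is hollow and strictly contains $P$. Hence $P$ is not maximal hollow, and combined with Theorem~\ref{main} this shows every maximal hollow lattice $d$-polytope must lie among the finitely many exceptions, proving the first claim.

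For the second step I would show that the class of exceptions is closed under hollow enlargement: if $P_0 \subsetneq P_1$ are both hollow lattice $d$-polytopes and $P_1$ admits a lattice projection $\phi \colon \R^d \to \R^{d-1}$ onto a hollow polytope, then $\phi(P_0) \subset \phi(P_1)$ is hollow (as a subset of a hollow polytope) and $(d-1)$-dimensional, because $P_0$ is full-dimensional and $\phi$ is surjective. So $P_0$ also admits such a projection, and is not an exception. Contrapositively, any hollow polytope properly containing an exception is again an exception. An ascending chain $P_0 \subsetneq P_1 \subsetneq \cdots$ of hollow polytopes starting from an exception therefore stays within the finite list of exception classes; since normalized volume is strictly increasing under proper containment of full-dimensional polytopes and constant on unimodular equivalence classes, the chain visits each class at most once and must terminate. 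The terminal polytope is a maximal hollow polytope containing $P_0$.

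The main obstacle is the chain termination in the second step. Volume alone cannot bound arbitrary ascending chains of hollow polytopes, since Cayley polytopes already exhibit unbounded volume. The argument therefore relies crucially on the closure-under-enlargement property of the exceptions, which is where the dimension count ensuring that $\phi(P_0)$ is genuinely $(d-1)$-dimensional (rather than degenerating into a lower-dimensional lattice polytope) becomes the essential ingredient.
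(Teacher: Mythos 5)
Your proposal is correct and takes essentially the same route as the paper's proof: projecting polytopes are non-maximal because they can be strictly enlarged inside the hollow prism $\phi^{-1}(Q)$, and the second claim follows from a chain argument combined with the finiteness of the exceptions in Theorem~\ref{main}. You merely spell out two steps the paper leaves implicit --- the explicit lattice-point enlargement along the fiber direction, and the heredity observation that a full-dimensional subpolytope of a polytope projecting onto a hollow $(d-1)$-polytope itself projects onto a hollow $(d-1)$-polytope --- which is exactly what makes the volume bound applicable to every member of the chain.
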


\begin{proof}
Any lattice $d$-polytope $P$ that admits a lattice projection onto a hollow lattice polytope is necessarily also hollow. 
In particular, any such lattice polytope $P$ is strictly contained in a bigger hollow lattice polytope, so it cannot be maximal. 
On the other hand, if a hollow lattice polytope is not contained in a maximal one, 
then it is contained in an infinite inclusion-chain of hollow 
lattice polytopes. Hence by Theorem~\ref{main} one of these bigger hollow lattice polytopes admits a lattice projection 
onto a lower-dimensional hollow lattice polytope.
\end{proof}

Let us denote the finite list of maximal hollow lattice $d$-polytopes by $\calE_d$. The previous result shows that 
it is theoretically possible to completely `classify' all hollow lattice $d$-polytopes by determining  
$\calE_{d'}$ for $d' \leq d$. We have $\calE_1 = \emptyset$ and $\calE_2 = \{\convSet{(0,0),(2,0),(0,2)}\}$. 

\medskip

For $\calE_3$ an important partial result was achieved in \cite{AWW10} using a relaxed notion of maximality. 
For this, let us compare being maximal \emph{as a hollow lattice polytope} with being maximal \emph{as a hollow convex body}.
As was observed in \cite{Lov89}, a convex body is maximal as a hollow convex body 
if and only if it is a hollow polytope such that each facet contains a lattice point in its relative interior. 
Let us denote by $\calM_d$ the set of lattice $d$-polytopes which are maximal as hollow convex bodies. 
Of course, $\calM_d \subseteq \calE_d$. We observe that equality holds for $d \leq 2$. However, for $d \geq 4$ 
the inclusion $\calM_d \subsetneq \calE_d$ is strict:

\begin{theorem}
For $d \geq 4$, there are lattice $d$-polytopes that are maximal hollow lattice polytopes but not maximal hollow convex bodies.
\label{examples}
\end{theorem}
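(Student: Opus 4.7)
By the Lov\'asz observation quoted in the introduction, a hollow lattice polytope $P$ fails to lie in $\calM_d$ precisely when some facet of $P$ has no lattice point in its relative interior. So the task for each $d \geq 4$ is to produce a hollow lattice polytope $P \subset \R^d$ satisfying both
\emph{(a)} some facet of $P$ contains no lattice point in its relative interior, and
\emph{(b)} every enlargement $\conv(P \cup \{v\})$ with $v \in \Z^d \setminus P$ fails to be hollow.
My plan is to construct an explicit example in dimension four and then propagate it by a lattice pyramid construction.

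For the four-dimensional example I would search among hollow polytopes built from the hollow triangle $T = \convSet{(0,0),(2,0),(0,2)}$, for instance $T \times T$, the dilated simplex $\{x \in \R^4 : x_i \geq 0,\ x_1 + x_2 + x_3 + x_4 \leq 2\}$, or a suitable Cayley polytope built from small hollow tetrahedra. All such candidates automatically satisfy (a) by an arithmetic count---a positive integer vector with small coordinate sum cannot have all coordinates positive---and the challenge is to single out one that is rigid enough to also satisfy (b). Hollowness and (a) are then direct verifications from the facet description. One should keep in mind that seemingly natural candidates can be extendable along a single vertex direction without creating an interior lattice point, so picking the right polytope is part of the work.

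To verify (b), I would first show that any $v \in \Z^d \setminus P$ of sufficiently large norm creates a long chord through $P$ that is forced to contain an interior lattice point, reducing the check to a bounded and hence finite set of candidate $v$. For each such $v$, I would identify a lattice point $p \in \partial P$ that moves into the interior of $P' := \conv(P \cup \{v\})$: concretely, $p$ must lie on a facet of $P$ strictly visible from $v$ but not on any new facet of $P'$ incident to $v$. Since (a) precludes choosing $p$ in the relative interior of a facet, one is forced to take $p$ from a lower-dimensional face of the visible facet; this is the delicate point where the chosen example must be sufficiently rigid. The symmetries of the candidate should keep the casework manageable.

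For $d \geq 5$, let $\hat P := \conv((P \times \{0\}) \cup \{e_{d+1}\})$ be the lattice pyramid over a $d$-dimensional example $P$. Hollowness is automatic, since any interior lattice point has last coordinate a positive integer strictly less than $1$. Property (a) is inherited by the base facet $P \times \{0\}$. For maximality, write a hypothetical new lattice point as $v = (u, h)$ with $h \in \Z$: for $h \not\in \{0,1\}$ a slicing argument produces an interior lattice point of $\conv(\hat P \cup \{v\})$; for $h = 0$ the statement reduces to the maximality of $P$; and $h = 1$ forces $v = e_{d+1} \in \hat P$. Induction then yields examples in every dimension. The main obstacle is the casework in step (b) in dimension four: once (a) is arranged, the ``natural'' candidate interior lattice points from facet relative interiors are unavailable, and a careful geometric choice of $P_0$ is required to ensure that each potential enlargement unavoidably pulls some lower-dimensional-face lattice point into the interior.
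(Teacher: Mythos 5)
Your outline correctly reduces the problem via Lov\'asz's criterion, but as written it has two genuine gaps. First, no four-dimensional example is actually produced: you list candidates ($T\times T$, the dilated simplex $\{x\ge 0,\ \sum x_i\le 2\}$, Cayley polytopes) and defer the decisive verification of maximality as a lattice polytope, which you yourself identify as ``the delicate point.'' That verification is the entire content of the theorem, and none of the listed candidates is shown to pass it. Second, and more seriously, the induction step is based on a false principle. A unit-height lattice pyramid over a maximal hollow lattice polytope need not be maximal, and your claim that a new lattice point $(u,h)$ with $h\notin\{0,1\}$ always forces an interior lattice point by ``a slicing argument'' fails already in dimension $3$: the pyramid $\conv\{0,2e_1,2e_2,e_3\}$ over the maximal hollow polygon $\convSet{(0,0),(2,0),(0,2)}$ is properly contained in the hollow simplex $\conv\{0,2e_1,2e_2,2e_3\}$, obtained precisely by adjoining the point $2e_3$ at height $h=2$. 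So the propagation from $d=4$ to higher dimensions does not go through without substantial extra hypotheses on the base, and the slice $(1-\tfrac1h)P+\tfrac uh$ at height $1$ need not capture a lattice point in the interior.

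For comparison, the paper avoids both problems by writing down, uniformly for every $d\ge4$, an explicit non-lattice hollow simplex $\Delta(d)$ and taking its integer hull $\Delta(d)_I$ (a $(d-3)$-fold pyramid over a triangular prism); no induction on $d$ is used. Non-maximality as a hollow convex body is then immediate from the proper inclusion $\Delta(d)_I\subsetneq\Delta(d)$ rather than from checking a facet without relatively interior lattice points. Maximality as a lattice polytope is proved by the beyond/beneath argument you gesture at, but made effective: $d+1$ of the $d+2$ facets are shown to contain explicit relatively interior lattice points ($\mathbbm{1}-e_i$ and $\mathbbm{1}$), so any added vertex must lie beyond the single remaining facet, hence inside $\Delta(d)$, hence already in its integer hull. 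If you want to salvage your plan, you would need to replace the pyramid induction by such a uniform family (or prove a maximality-transfer lemma for pyramids under additional assumptions) and commit to and fully verify a concrete base example.
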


The proof is given in Section~\ref{examples-proof}. This leaves the case $d=3$, where it is believed but still open 
that there is no difference 
in these two notions of maximality.

\begin{question}\ {\rm 
Is $\calM_3 = \calE_3$~?
\label{q}}
\end{question}

In \cite{Tre08} \cite{Tre10} it was shown that $|\calM_3| \geq 10$. Recently, 
in \cite{AWW10} it was proven that $|\calM_3| = 12$. Therefore, confirming Question~\ref{q} would finish 
the classification of $\calE_3$.

\smallskip

The method of proof of Theorem~\ref{main} yields an upper bound on the volume of a lattice polytope 
in $\calE_d$. However, already for $d=3$ this yields a number with $117$ digits.
Using the results of Pikhurko \cite{Pik01} we give a slightly more reasonable bound,
to be proved at the end of Section~\ref{proof}:

\begin{proposition} The volume of a lattice polytope in $\calE_3$ is at most $4106$.
\label{bound}
\end{proposition}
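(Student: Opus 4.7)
The plan is to revisit the proof of Theorem~\ref{main} in the specific case $d = 3$ and to substitute Pikhurko's explicit quantitative estimates at the key step. Let $P \in \calE_3$. By the proof of Corollary~\ref{max}, maximality of $P$ implies that $P$ admits no lattice projection onto a hollow lattice $2$-polytope; in particular, the exceptional alternative in Theorem~\ref{main} must hold for $P$, so a volume bound is what one should aim for.

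Following the strategy of Kannan and Lov\'asz as outlined in the introduction, I would fix a lattice projection $\phi \colon \R^3 \to \R^2$ chosen so that the image polygon $Q := \phi(P)$ is as ``thin'' as possible with respect to its lattice structure. The quantitative content of \cite{KL88} is that whenever the volume of $P$ exceeds a concrete dimension-three threshold, one can force the interior lattice points of $Q$ (if any) to lie within a prescribed small neighborhood of the boundary $\partial Q$. The precise threshold depends on how small a neighborhood one insists upon.

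Next I would invoke Pikhurko's $2$-dimensional estimates from \cite{Pik01}: a lattice polygon with at least one interior lattice point has area bounded by an explicit, small constant (a quantitative form of Scott's theorem in this dimension). Since $P$ does not project onto a hollow polygon, $Q$ must have at least one interior lattice point. Feeding the Pikhurko bound for the area of $Q$ into the Kannan--Lov\'asz volume-versus-thinness inequality produces an explicit upper bound for the volume of $P$, much smaller than the generic estimate obtained by invoking Kannan--Lov\'asz alone.

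The main obstacle is bookkeeping: tracking all constants through the projection argument, optimizing the free parameter that governs the tradeoff between how close the lattice points of $Q$ must lie to $\partial Q$ and the volume threshold imposed on $P$, and verifying that the resulting numerical bound is exactly $4106$ rather than some slightly larger number. Aside from this explicit computation, the argument is a direct quantitative refinement of the proof of Theorem~\ref{main}.
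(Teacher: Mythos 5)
Your overall strategy---rerun the proof of Theorem~\ref{main} for $d=3$ and substitute explicit two-dimensional constants---is the right starting point, but the proposal has two genuine gaps. First, the two-dimensional input you invoke is not what Pikhurko proves, and as you state it, it is false: a lattice polygon with \emph{at least one} interior lattice point does \emph{not} have area bounded by a constant (consider $\convSet{(0,0),(N,0),(0,N)}$); Scott-type theorems bound the area only in terms of the \emph{number} of interior lattice points. What Pikhurko's theorem actually provides, and what the argument needs, is an interior lattice point $w$ of a non-hollow polygon with bounded coefficient of asymmetry, i.e.\ a $\delta$-central point for an explicit $\delta>0$. Moreover, its logical role is different from what you describe: in the Kannan--Lov\'asz alternative (Theorem~\ref{theorem-kl}) one uses the centrality guarantee to show that in the cases $i\ge1$ the projected polytope $\pi(P)$ must be hollow (otherwise it would contain a $\delta$-central interior lattice point with $\delta>\Delta_i$, violating condition~(1)), so a \emph{maximal} hollow $P$ can only fall into the case $i=0$, where the volume bound $1/\Delta_1^{3}$ applies to $P$ itself. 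There is no step in which an ``area of $Q$'' is fed back into a Kannan--Lov\'asz inequality.

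Second, even after replacing your area statement by the correct centrality statement, plugging in Pikhurko's generic constant $8k(8s+7)^{2^{2k+1}}$ with $k=2$, $s=1$ reproduces exactly the $117$-digit bound that the paper says the general method gives---not $4106$. The entire content of Proposition~\ref{bound} is a \emph{sharpened} two-dimensional centrality estimate (Lemma~\ref{lemma-improved}): every lattice polygon with interior lattice points contains an interior lattice point $w$ with ${\rm ca}(w,P')\le \tfrac{2}{0.124904}-1$, proved by a case analysis on a maximal-area triangle $S\subseteq P'$ (unimodular $S$; hollow $S$ with a boundary lattice point; $S$ with interior points, using Pikhurko's Table~1 and his Lemma~3). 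This is what permits the choice $\Delta_1=0.124903/2$ and hence the bound $1/\Delta_1^{3}\approx 4105.55$. Your proposal contains no analogue of this lemma, so the ``bookkeeping'' you defer cannot produce the stated constant.
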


We remark that the maximal volume of a lattice polytope in $\calM_3$ is~$6$. 
It is likely that the methods of Treutlein \cite{Tre08} \cite{Tre10} 
could eventually lead to a sharpening of this bound or even a resolution of Question~\ref{q}.

\subsection{Lattice width}

One of the main motivation to study hollow lattice polytopes comes from the general interest in geometry of numbers and 
optimization in flatness and lattice width of convex bodies,  see e.g.~\cite{Bar02}. The \emph{lattice width} of a lattice polytope 
is defined as the infimum of $\max(u(P)) - \min(u(P))$ over all non-zero integer lattice directions $u$.

\begin{corollary} The maximal lattice width of $d$-dimensional hollow lattice polytopes equals the maximal lattice width of 
lattice polytopes in $\calE_{d'}$ for $d' \leq d$. 
Moreover, there are only finitely many $d$-dimensional hollow lattice polytopes whose lattice width is larger than that 
of any $(d-1)$-dimensional hollow lattice polytope.
\end{corollary}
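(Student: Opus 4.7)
The plan is to establish the recursion $W(d) = \max(W(d-1), W_\calE(d))$, where $W(d)$ denotes the maximum lattice width of a hollow lattice $d$-polytope and $W_\calE(d) := \max\{\operatorname{width}(Q) : Q \in \calE_d\}$, and then unfold it; the second assertion of the corollary will fall out as an immediate by-product.

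First I would record two elementary monotonicity properties of lattice width. Under inclusion $P \subseteq \tilde{P}$ one has $\operatorname{width}(P) \le \operatorname{width}(\tilde{P})$, because each linear functional has a smaller range on the smaller body. Under a lattice projection $\phi \colon \R^d \to \R^{d-1}$, pulling any non-zero integer functional $u^*$ on the target back through $\phi$ yields a non-zero integer functional $u^* \circ \phi$ on the source with $\operatorname{width}_{u^* \circ \phi}(P) = \operatorname{width}_{u^*}(\phi(P))$, so the infimum of $\operatorname{width}_v(P)$ over all non-zero $v \in \Z^d$ is at most the infimum over those of the form $v = u^* \circ \phi$, giving $\operatorname{width}(P) \le \operatorname{width}(\phi(P))$. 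Next I would verify the dimensional monotonicity $W(d) \ge W(d-1)$ via a prism construction: for any hollow $(d-1)$-polytope $Q$ with $\operatorname{width}(Q) = W(d-1)$ and any integer $N \ge W(d-1)$, the prism $P := Q \times [0,N] \subset \R^d$ is a hollow lattice $d$-polytope (its interior $\intr(Q) \times (0,N)$ contains no lattice point, since $Q$ is hollow), and the direct formula $\operatorname{width}_{(u',c)}(P) = \operatorname{width}_{u'}(Q) + N|c|$ over non-zero $(u',c) \in \Z^{d-1} \oplus \Z$ yields $\operatorname{width}(P) = \min(\operatorname{width}(Q), N) = W(d-1)$.

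With these tools in hand I would combine Theorem~\ref{main} and Corollary~\ref{max} to obtain the recursion: an arbitrary hollow lattice $d$-polytope $P$ either admits a lattice projection onto a hollow $(d-1)$-polytope, in which case $\operatorname{width}(P) \le W(d-1)$ by the projection bound, or belongs to the finite exception set of Theorem~\ref{main}, in which case Corollary~\ref{max} embeds $P$ into some $\tilde{P} \in \calE_d$ and the inclusion bound yields $\operatorname{width}(P) \le W_\calE(d)$. Together with the trivial $W(d) \ge W_\calE(d)$ and the prism inequality $W(d) \ge W(d-1)$, this proves $W(d) = \max(W(d-1), W_\calE(d))$, and iterating gives $W(d) = \max_{d' \le d} W_\calE(d')$, which is the first assertion. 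The second assertion is then immediate: a $d$-dimensional hollow polytope of width strictly exceeding $W(d-1)$ cannot project onto a hollow $(d-1)$-polytope, hence by Theorem~\ref{main} must lie in the finite exception set. The only non-trivial ingredient is the prism construction, whose sole role is to certify weak monotonicity of $W$ in the dimension.
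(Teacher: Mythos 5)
Your proof is correct and follows the same route as the paper: the published proof consists of the single observation that lattice width cannot increase under a lattice projection onto a hollow $(d-1)$-polytope, with the combination with Theorem~\ref{main} and Corollary~\ref{max} left entirely to the reader. Your write-up supplies exactly the details the paper suppresses, most notably the prism construction $Q\times[0,N]$ certifying $W(d)\ge W(d-1)$, which is genuinely needed for the lower-bound direction of the first assertion and does not appear in the paper at all.
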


\begin{proof} Let $P$ be a $d$-dimensional hollow lattice polytope with a lattice projection 
onto a $(d-1)$-dimensional hollow lattice polytope $P'$. Then the lattice width of $P$ is at most the lattice width of $P'$.
\end{proof}

The reader may compare this result with the conjecture  on empty lattice $d$-simplices in  \cite[Conj.~7]{HZ00}, which 
was recently proven for $d \leq 4$ \cite{BBBK09}.

\section{Proofs of Theorem~\ref{main} and Proposition~\ref{bound}}
\label{proof}

\noindent We will prove a slightly more general version of Theorem~\ref{main}. 
Throughout, let $P$ be a lattice $d$-polytope. We denote the interior of $P$ by $\intr(P)$. For a positive integer $s$, let us define as in \cite{Pik01}
\[I_s(P) := \intr(P) \cap s \Z^d.\]
We say that a lattice polytope $P$ is \emph{$s$-hollow}, if $I_s(P) = \emptyset$. 

\begin{theorem}
	Let $d,s\ge1$ be fixed. Then any $s$-hollow lattice $d$-polytope $P$ admits a lattice projection 
onto an \text{$s$-hollow} $(d-1)$-dimensional lattice polytope, except if $P$ belongs to one of finitely many exceptions. 
The volume of any such exceptional $s$-hollow lattice polytope is bounded by
\[s^d \left(8(d-1)(8s+7)^{2^{2d-1}} + 1\right)^d.\]
\label{main2}
\end{theorem}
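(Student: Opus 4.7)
The plan is to combine the two key inputs advertised in the introduction: the covering-minima/flatness theorem of Kannan and Lov\'asz \cite{KL88}, and Pikhurko's volume bound for lattice polytopes with interior $s$-lattice points \cite{Pik01}. I will prove the contrapositive: if an $s$-hollow lattice $d$-polytope $P$ has volume strictly exceeding the stated number, then $P$ must admit a lattice projection onto an $s$-hollow $(d-1)$-polytope.

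First I would apply Kannan--Lov\'asz to $\tfrac{1}{s}P$, which is hollow in the usual sense. Their inequality bounds $\mu_1(\tfrac{1}{s}P)\cdots\mu_{d-1}(\tfrac{1}{s}P)$ from above by a constant depending only on $d$ divided by $\mathrm{vol}(\tfrac{1}{s}P)$, while hollowness keeps $\mu_d(\tfrac{1}{s}P)$ bounded away from~$0$. When $\mathrm{vol}(P)$ is sufficiently large, this forces some covering minimum to be very small, which extracts a primitive lattice vector $u \in \Z^d$ along which $P$ is thin. The resulting lattice projection $\pi_u \colon \R^d \to \R^d/\Z u \cong \R^{d-1}$ yields a $(d-1)$-dimensional lattice polytope $P' := \pi_u(P)$ satisfying a quantitative additional property: every interior $s$-lattice point of $P'$ is forced to lie inside a thin collar of $\partial P'$, and the collar thickness tends to $0$ as $\mathrm{vol}(P) \to \infty$.

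If $P'$ is $s$-hollow we are done; otherwise $P'$ has an interior $s$-lattice point $q$ close to its boundary. Pikhurko's theorem \cite{Pik01} then supplies an explicit upper bound on $\mathrm{vol}(Q)$ for any lattice $(d-1)$-polytope $Q$ containing such an interior $s$-lattice point; this is precisely where the tower $(8s+7)^{2^{2d-1}}$ enters. Combining Pikhurko's bound on $\mathrm{vol}(P')$ with the Kannan--Lov\'asz bound on the width of $P$ along $u$, via the standard inequality $\mathrm{vol}(P) \le (\text{$u$-width of }P) \cdot \mathrm{vol}(P')$ (with lattice factors absorbed into the prefactor $s^d$), produces
\[
\mathrm{vol}(P) \;\le\; s^d \bigl(8(d-1)(8s+7)^{2^{2d-1}} + 1\bigr)^d,
\]
contradicting the assumed lower bound on $\mathrm{vol}(P)$. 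Finiteness of the set of exceptions up to unimodular equivalence then follows from any explicit volume bound, by the classical fact that there are only finitely many lattice polytopes of bounded volume up to unimodular equivalence.

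The principal obstacle is the quantitative bookkeeping in the last paragraph: matching the qualitative Kannan--Lov\'asz output (``collar thickness tends to~$0$'') with Pikhurko's precise threshold (``\emph{how} thin a collar forces $s$-hollowness''), and propagating both constants through the tower of exponentials to land at exactly the closed form above. Once this alignment is in place, the remainder of the argument is essentially mechanical; the theorem is the quantitative refinement of the informal chain of implications sketched in the paragraph following Theorem~\ref{main}.
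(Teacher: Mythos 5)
Your overall strategy (combine Kannan--Lov\'asz with Pikhurko) is the right one, but the specific chain of inequalities you propose breaks at a crucial point. You claim that Pikhurko supplies an explicit upper bound on $\mathrm{vol}(Q)$ for any lattice $(d-1)$-polytope $Q$ containing an interior $s$-lattice point. No such bound exists: already $[0,2]\times[0,N]$ is a lattice polygon with interior lattice points and area $2N$, which is unbounded. Hensley-type volume bounds, including Pikhurko's, are stated in terms of the \emph{number} of interior ($s$-)lattice points, and nothing in your argument controls that number for $P'=\pi(P)$. Consequently the inequality $\mathrm{vol}(P)\le(\text{width of }P)\cdot\mathrm{vol}(P')$ gives you nothing, and the contradiction you aim for never materializes. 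The ``thin collar'' information you extract from Kannan--Lov\'asz cannot be traded for a volume bound on $P'$; it is the collar, not the volume of $P'$, that has to carry the argument.

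The paper's proof uses precisely the ingredient you mention in passing and then abandon. It applies Corollary (3.8) of \cite{KL88} (not merely the flatness theorem): for a chosen sequence $0=\Delta_0<\Delta_1<\dots<\Delta_d=1$ this produces an index $i\in\{0,\dots,d-1\}$ and a lattice projection $\pi$ onto a $(d-i)$-dimensional polytope such that (1) no point of $I_s(\pi(P))$ is $\delta$-central for $\delta>\Delta_i$, and (2) $\mathrm{vol}(\pi(P))\le s^{d-i}/(\Delta_{i+1}-\Delta_i)^{d-i}$. If $i=0$, then $\pi(P)=P$ and (2) bounds $\mathrm{vol}(P)$ by $s^d/\Delta_1^d$, which with the paper's choice of $\Delta_1$ is exactly the stated constant; these are the finitely many exceptions, by Lagarias--Ziegler. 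If $i\ge1$ and $\pi(P)$ were not $s$-hollow, then Pikhurko's Theorem 4 --- the coefficient-of-asymmetry bound, not his volume bound --- would produce a point of $I_s(\pi(P))$ that \emph{is} $\delta$-central for some $\delta>\Delta_i$, contradicting (1); hence $\pi(P)$ is $s$-hollow and one is done. Note also two further defects of your sketch: flatness alone, which is all you actually invoke from \cite{KL88}, does not yield the non-centrality property (1) for the $(d-1)$-dimensional image (that requires the reduced-basis argument behind Corollary (3.8) and the freedom to choose $i$); and your closing assertion that the constants ``land at exactly the closed form above'' could not be delivered by your route, since in the paper that closed form arises from the $i=0$ case of (2) rather than from any product of a width bound with a volume bound on the projection.
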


In particular, as in the proof of Corollary~\ref{max}, we get the following  finiteness result, which was recently proven in \cite{AWW10}:

\begin{corollary}
	Let $d,s\ge1$ be fixed. Then there are only finitely many maximal $s$-hollow  lattice $d$-polytopes.
\end{corollary}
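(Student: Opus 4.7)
The plan is to combine Theorem~\ref{main2} with the observation that a maximal $s$-hollow lattice polytope cannot admit a lattice projection onto a lower-dimensional $s$-hollow lattice polytope. This is exactly the argument that proves Corollary~\ref{max}, with ``hollow'' replaced throughout by ``$s$-hollow.''

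The key reduction is as follows. Suppose an $s$-hollow lattice $d$-polytope $P$ admits a lattice projection $\phi \colon \R^d \to \R^{d-1}$ onto an $s$-hollow $(d-1)$-polytope $Q = \phi(P)$. I would then show that $P$ is strictly contained in a larger $s$-hollow lattice polytope. After a unimodular change of coordinates making $\ker(\phi) = \R e_d$, we have $P \subseteq Q \times [a,b]$ for some $a,b \in \Z$, and the prism $Q \times [a, b]$ is itself an $s$-hollow lattice polytope, since $\intr(Q) \cap s\Z^{d-1} = \emptyset$ forces $\intr(Q \times [a,b]) \cap s\Z^d = \emptyset$. Enlarging the slab by replacing $b$ with $b+1$ (or $a$ with $a-1$) then produces a strictly larger $s$-hollow lattice polytope containing $P$, so $P$ is not maximal.

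Contrapositively, every maximal $s$-hollow lattice $d$-polytope admits no lattice projection onto an $s$-hollow $(d-1)$-polytope, and hence lies among the finitely many exceptional polytopes produced by Theorem~\ref{main2}. Since these exceptions have volume bounded by the explicit expression in the theorem, and since lattice $d$-polytopes of bounded volume form only finitely many unimodular equivalence classes, the number of maximal $s$-hollow lattice $d$-polytopes is finite. All substantive difficulty is absorbed into Theorem~\ref{main2}; the only additional work is the routine prism/extrusion argument above, which presents no real obstacle.
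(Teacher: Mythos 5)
Your proposal is correct and follows essentially the same route as the paper, which proves this corollary by the exact argument of Corollary~\ref{max} with ``hollow'' replaced by ``$s$-hollow'': a polytope projecting onto a lower-dimensional $s$-hollow polytope sits inside a strictly larger $s$-hollow lattice polytope (your explicit prism/extrusion makes this step concrete), so every maximal one must be among the finitely many exceptions of Theorem~\ref{main2}.
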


In order to prove Theorem~\ref{main2} we need the following notion, which was introduced in \cite{KL88}.

\begin{definition}
	A point $w \in \intr(P)$ is \emph{$\delta$-central} for some $\delta > 0$
	if for every $y \in P$ there is some $z \in P$ such that $z-w = \delta (z-y)$,
	i.e., $z-w = -\frac\delta{1-\delta}(y-w)$.
\end{definition}

Here is one of the main results in \cite{KL88}, Corollary (3.8), in a version for lattice polytopes.

\begin{theorem}[Kannan \& Lov\'asz 1988 \cite{KL88}] Let $P \subset \R^d$ be an $s$-hollow lattice $d$-polytope. 
Let $\Delta_1, \ldots, \Delta_d$ be real numbers with $0 = \Delta_0 < \Delta_1 < \cdots < \Delta_d \leq 1$. 
Then there is an $i \in \{0, \ldots, d-1\}$ and an $i$-dimensional subspace $U \subset \R^d$ affinely spanned by elements in $s \Z^d$ 
such that the lattice projection $\pi \;:\; \R^d \to \R^d/U$ satisfies the following two properties:
\begin{enumerate}
\item $\pi(P)$ does not contain points in $I_s(\pi(P))$ that are $\delta$-central with $\delta > \Delta_i$,
\item the volume of $\pi(P)$ (w.r.t. $\pi(\Z^d)$) is at most $s^{d-i}/(\Delta_{i+1}-\Delta_i)^{d-i}$.
\end{enumerate}
\label{theorem-kl}
\end{theorem}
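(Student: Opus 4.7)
My plan is to construct the subspace $U$ and the index $i$ iteratively, starting from $U_0 = \{0\}$ and enlarging by one primitive lattice generator per step. The invariant maintained at stage $i$ is that the projection $\pi_i(P)$ satisfies conclusion~(1) of the theorem with threshold $\Delta_i$. For the base case $i = 0$, the projection $\pi_0$ is the identity, and since $P$ is $s$-hollow, $I_s(P) = \emptyset$, so (1) holds vacuously with $\Delta_0 = 0$. If additionally the volume of $P$ is at most $s^d/\Delta_1^d$, conclusion~(2) also holds and we are done; otherwise we pass to the inductive step.

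At stage $i$, I assume (1) holds at $\Delta_i$ but (2) fails, so the volume of $\pi_i(P)$ strictly exceeds $s^{d-i}/(\Delta_{i+1}-\Delta_i)^{d-i}$. I would then invoke the geometric dichotomy that underlies \cite{KL88}: for a convex body $K$ in an $n$-dimensional lattice whose volume exceeds $s^n/\delta^n$, either $K$ contains a $\delta$-central lattice point in its interior, or there is a primitive lattice vector along which $K$ admits a well-controlled further projection. This dichotomy is proved by a Minkowski-type volume argument applied to the centrally symmetrized body obtained by reflecting $K$ through a candidate deep point, combined with bounds from the theory of successive minima. In our setting with $\delta = \Delta_{i+1} - \Delta_i$, the first alternative would produce an interior lattice point of $\pi_i(P)$ that is $\delta$-central with $\delta > 0$; pulling such a point back through the chain of projections already built must contradict invariant~(1) at some earlier stage because $\Delta_{j+1} > \Delta_j$ throughout. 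The second alternative therefore applies and provides a primitive lattice vector $\bar v$ in the quotient lattice, which lifts to a vector in $s\Z^d$ enlarging $U_i$ to $U_{i+1}$ of dimension $i + 1$.

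The main obstacle is the quantitative lifting lemma needed to verify that the new projection $\pi_{i+1}(P)$ satisfies conclusion~(1) with the strengthened threshold $\Delta_{i+1}$. Concretely, one must show that a $\delta$-central lattice point with $\delta > \Delta_{i+1}$ in $\pi_{i+1}(P)$ yields a $\delta'$-central lattice point in $\pi_i(P)$ with $\delta' > \Delta_i$, contradicting the invariant one step earlier. The exact arithmetic of this shift exploits the strict chain $0 = \Delta_0 < \Delta_1 < \cdots < \Delta_d \leq 1$ and requires analyzing how $\delta$-centrality behaves under a one-dimensional lattice projection; this is the technical core of the Kannan--Lov\'asz argument. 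Once it is in place, termination is automatic, because the dimension $i$ strictly increases at each inductive step and is capped at $d - 1$, so the procedure must halt with some index for which both conclusions hold.
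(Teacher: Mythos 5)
Your proposal does not prove the theorem: it is an outline of how one might reprove Kannan--Lov\'asz Corollary~(3.8) from scratch, and the two steps that carry all of the mathematical content are asserted rather than established. The ``geometric dichotomy'' (large volume implies either a $\delta$-central lattice point or a well-controlled projection direction) and the ``quantitative lifting lemma'' (a $\delta$-central point upstairs with $\delta>\Delta_{i+1}$ yields a $\delta'$-central point downstairs with $\delta'>\Delta_i$) are exactly the hard part of \cite{KL88}, and you explicitly defer both. Worse, the one place where you do claim a concrete contradiction is not justified as stated: the dichotomy you invoke at stage $i$ produces a point that is $\delta$-central for $\delta=\Delta_{i+1}-\Delta_i$, but the invariant at stage $i$ only excludes points that are $\delta$-central for $\delta>\Delta_i$, and $\Delta_{i+1}-\Delta_i$ need not exceed $\Delta_i$; nor is it clear that centrality pulls back through the chain of projections in the way you assert. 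The actual Kannan--Lov\'asz argument is not a greedy one-direction-at-a-time induction: it fixes a reduced basis $b_1,\dots,b_d$ of $s\Z^d$ with respect to the difference body $P-P$ at the outset and then selects the index $i$ by locating a suitable gap among the associated quantities; the volume bound in~(2) and the non-centrality in~(1) both come from properties of that reduced basis, not from a termination argument.

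You have also misjudged what needs to be proved here. The paper does not reprove the Kannan--Lov\'asz result; it cites \cite[Corollary~(3.8)]{KL88} and only has to bridge a small gap, namely that the cited result is stated for convex bodies containing \emph{no} points of $s\Z^d$, whereas $P$ is merely $s$-hollow (no \emph{interior} points of $s\Z^d$). This is done by fixing $x\in\intr(P)$, applying the cited corollary to the shrunken bodies $P_t=t(P-x)+x$ for $t<1$ (which miss $s\Z^d$ entirely), observing that the reduced basis of $s\Z^d$ with respect to $P_t-P_t=t(P-P)$ does not depend on $t$, and passing to the limit $t\to1$ along a subsequence on which the index $i$ is constant. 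If you want a self-contained proof you must actually supply the reduced-basis machinery and the covering-minima estimates from \cite{KL88}; short of that, the correct move is the reduction just described.
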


\begin{proof} 
We want to apply \cite[Corollary (3.8)]{KL88} to the $s$-hollow lattice polytope $P$. 
Since this result is only formulated for convex bodies that contain no points in the lattice $s \Z^d$, 
we choose a fixed point $x \in \intr(P)$ and approximate $P$ by $P_t := t (P-x)+x$ for $0 < t < 1$. 
Let $b_1, \ldots, b_d$ be the reduced basis (see  \cite[Definition (3.3)]{KL88}) 
for the lattice $s \Z^d$ with respect to the centrally symmetric convex body $P_t - P_t = t (P-P)$. By definition, 
this basis is independent of the choice of~$t$. For $P_t$ it is shown in \cite[proof of Cor.~(3.8)]{KL88} that 
there is an $i \in \{0, \ldots, d-1\}$ such that the projection along the 
subspace spanned by $b_1, \ldots, b_i$ (for $i > 0$; respectively, along $\{0\}$ for $i=0$) satisfies the two desired conditions. 
Since we may assume that $i$ is the same for infinitely many $t$ arbitrarily close to $1$, the statement follows.
\end{proof}

Next, let us recall \cite[Theorem 4]{Pik01}. For this, we need the following definition:

\begin{definition}{\rm The \emph{coefficient of asymmetry} of an interior point $w$ of $P$ is defined as 
\[{\rm ca}(w, P) := \max_{|y|=1} \frac{\max\{\lambda \;|\; w + \lambda y \in P\}}{\max\{\lambda \;|\; w - \lambda y \in P\}}\]
}
\end{definition}

This notion is just a variant of the above definition of centrality: 
An interior point $w$ of $P$ with coefficient of asymmetry $c$ is $\frac{1}{c+1}$-central.

\begin{theorem}[Pikhurko 2001 \cite{Pik01}] Let $P' \subset \R^k$ be a $k$-dimensional lattice polytope with interior points in $s \Z^k$. 
Then there is a point $w \in I_s(P')$ with 
\[{\rm ca}(w,P') \leq 8k (8s+7)^{2^{2k+1}} - 1.\]
In particular, $w$ is $\delta$-central for
\[\delta = \frac{1}{8k (8s+7)^{2^{2k+1}}}.\]
\label{pik}
\end{theorem}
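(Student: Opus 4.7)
The plan is to proceed by induction on the dimension $k$. The base case $k=1$ is elementary: if $P' \subset \R$ is a bounded lattice interval meeting $s\Z$ in its interior, the element of $I_s(P')$ closest to the midpoint has coefficient of asymmetry bounded by a small constant depending only on $s$, well within the stated bound $8(8s+7)^{2^{3}} - 1$.

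For the inductive step, set $C_{k-1} := 8(k-1)(8s+7)^{2^{2k-1}}$ and let $P' \subset \R^k$ be a lattice $k$-polytope with $I_s(P') \neq \emptyset$. I would split on the lattice width $\mathrm{w}(P')$ with respect to a threshold $T$ to be chosen later. If $\mathrm{w}(P') \geq T$, then $P'$ is ``fat'' in every primitive lattice direction, and a direct volume-plus-flatness argument should produce an interior lattice point whose coefficient of asymmetry is bounded by a quantity decreasing in $T$. If instead $\mathrm{w}(P') < T$, fix a primitive direction $u$ realizing this small width and an integer level $n_0$ so that the slice $P'' := P' \cap \{u = n_0\}$ meets $I_s(P')$ in its relative interior; then apply the inductive hypothesis to $P''$, viewed as a $(k-1)$-dimensional lattice polytope in the slicing hyperplane, to obtain $w \in I_s(P'')$ with $\mathrm{ca}(w,P'') \leq C_{k-1}-1$.

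The key quantitative step is then to transfer the bound from $P''$ to $P'$: for test directions tangent to the slicing hyperplane one has $\mathrm{ca}(w,P') \leq \mathrm{ca}(w,P'')$ immediately, while for directions with a component in $u$ one decomposes and bounds the additional contribution by the transverse length, itself at most $T$. Combining the two contributions gives $\mathrm{ca}(w,P') \lesssim C_{k-1} \cdot T$. Balancing Cases A and B forces $T \approx C_{k-1}$, so the overall bound is of order $C_{k-1}^{2}$, and this squaring is exactly what propagates the exponent from $2^{2k-1}$ to $2^{2k+1}$, producing the double-exponential form; the linear factor $k$ accounts for the prefactor in $8k(8s+7)^{2^{2k+1}}$ accumulated additively through the recursion.

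The main obstacle I anticipate is the careful quantitative bookkeeping on both sides of the split. Case A requires a genuinely quantitative flatness-type statement converting ``large lattice width'' into an explicit bound on the coefficient of asymmetry for some concrete interior lattice point, not just an existence statement. Case B requires that, after slicing along $\{u = n_0\}$ and normalizing, the induced lattice in the hyperplane is again of the form to which the inductive hypothesis applies with the same parameter $s$, and that the transverse asymmetry does not accumulate an extra multiplicative factor beyond the one encoded in $T$; any slack here would inflate $C_{k-1}^{2}$ to something like $C_{k-1}^{3}$, destroying the prescribed doubling of the exponent. Handling these two subtleties cleanly, so that the recursion closes with the stated constants, is the crux of the argument.
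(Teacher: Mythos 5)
First, a point of comparison: the paper does not prove this statement at all --- it is quoted verbatim from \cite[Theorem 4]{Pik01} (only the special case $k=2$, $s=1$ is later sharpened in Lemma~\ref{lemma-improved}, by a quite different argument via maximal-area triangles and \cite[Lemma 3]{Pik01}). So your attempt has to be judged as a standalone reconstruction of Pikhurko's theorem, and as such it is an outline whose two ``anticipated obstacles'' are exactly where the mathematical content lies. Your Case A is in fact the unproblematic half: once the lattice width exceeds about $2s$ times the flatness constant, the homothet of $P'$ about its centroid $c$ with ratio $\mu=\tfrac12$ still has lattice width above the flatness threshold for $s\Z^k$ and hence contains a point $w\in I_s(P')$; since $\mathrm{ca}(c,P')\le k$ for the centroid, one gets $\mathrm{ca}(w,P')\le (k+\mu)/(1-\mu)\le 2k+1$, \emph{independently of} $T$. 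So there is nothing to balance: $T$ gets fixed at roughly $2s$ times the flatness constant, and the whole burden falls on Case B.

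Case B has two genuine gaps. First, the slice $P''=P'\cap\{u=n_0\}$ is in general only a rational polytope, not a lattice polytope (slice a lattice tetrahedron at an intermediate integer height), so the inductive hypothesis does not apply to it as stated; replacing $P''$ by $\mathop{\rm conv}(P'\cap\{u=n_0\}\cap s\Z^{k})$ restores the hypothesis but breaks the subsequent comparison of coefficients of asymmetry. Second, the transfer $\mathrm{ca}(w,P')\lesssim \mathrm{ca}(w,P'')\cdot T$ is asserted, not proved, and the coefficient of asymmetry does not decompose into tangential and transverse contributions in this way: already for the direction $u$ itself the relevant quantity is the ratio $(\max u(P')-n_0)/(n_0-\min u(P'))$ or its reciprocal, and one of these one-sided extents can be arbitrarily small compared with the other, so it is not controlled by the width $T$; taming it requires a careful choice of the level $n_0$ and a separate argument for tilted directions. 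Finally, the arithmetic does not close as you claim: squaring $C_{k-1}\approx(8s+7)^{2^{2k-1}}$ produces exponent $2^{2k}$, not $2^{2k+1}$; the stated bound corresponds to a \emph{fourth}-power recursion $C_k\approx C_{k-1}^{4}$. Indeed, if Cases A and B worked exactly as you describe, the recursion $C_k\approx C_{k-1}\cdot T$ with $T$ of order $s\cdot\mathrm{poly}(k)$ would yield a bound only singly exponential in $k$, vastly better than Pikhurko's --- a strong signal that the real losses in the transfer step are much larger than your bookkeeping allows. Since the result is in any case imported from \cite{Pik01}, the appropriate course in the context of this paper is to cite it rather than reprove it.
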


After these preparations, the proof of Theorem~\ref{main2} is quite straightforward.

\begin{proof}[Proof of Theorem~\ref{main2}]

We apply Theorem~\ref{theorem-kl} to the numbers 
\[\Delta_j :=  \frac{1}{8(d-j) (8s+7)^{2^{2(d-j)+1}} + 1}\]
with $j=1,\ldots, d$. Note that $0 < \Delta_1 < \cdots < \Delta_d = 1$.

In the case $i=0$, we observe from (2) that the volume of $\pi(P) = P$ is bounded by $s^d/\Delta_1^d$, 
a function in $d$. Hence, a result of Lagarias and the second author \cite[Thm.~2]{LZ91} implies that 
there are only finitely many lattice polytopes of at most this volume.

So, let $i \in \{1, \ldots, d-1\}$. We may assume that the $(d-i)$-dimensional lattice polytope 
$\pi(P)$ is not $s$-hollow. We apply Theorem~\ref{pik} for $P' := \pi(P)$ and $k := d-i$. Hence the point $w \in I_s(\pi(P))$ 
is $\delta$-central with $\delta > \Delta_i$, a contradiction to condition (1).
\end{proof}

\bigskip

Finally, the following sharpening of Theorem~\ref{pik} yields the proof of Proposition~\ref{bound}.

\begin{lemma}
Let $P'$ be a lattice polygon with interior lattice points. Then there is a point $w \in I_1(P')$ with 
\[{\rm ca}(w,P') \leq \frac{2}{0.124904} - 1\]
\label{lemma-improved}
\end{lemma}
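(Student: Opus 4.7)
The plan is to follow Pikhurko's own argument for Theorem~\ref{pik} but to specialize each step to the planar case $k = 2$, $s = 1$ and to track the constants carefully. The enormous factor $(8s+7)^{2^{2k+1}}$ in the general bound arises from a dimension-reducing recursion that loses constants at every level; for $k = 2$ the recursion has only two levels, and when $s = 1$ the lattice-point counting inputs that feed the estimates simplify drastically, so one expects a much smaller explicit numerical constant to be attainable without any genuinely new geometric idea.

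Concretely, I would execute Pikhurko's two-step reduction in this setting. First, produce a candidate interior lattice point $w$ by the successive-minima argument applied to the difference body $P' - P'$ (so that $w$ lies close to the Minkowski center of $P'$ along the direction of the first minimum), and second, estimate $\text{ca}(w, P')$ by combining the one-dimensional base case, where the sharp constant is given by an elementary calculation on intervals $[0,n] \cap \Z$, with a single two-dimensional refinement that propagates the one-dimensional bound across the fiber. Along the way, each generic estimate in Pikhurko's chain must be replaced by its sharp planar analogue, and each application of a Minkowski- or Blichfeldt-type inequality must be used at full strength rather than in its slacker general form, so that the product of layer-wise constants collapses into a clean inequality of the shape $\text{ca}(w,P') \leq 2/\gamma - 1$ for an explicit $\gamma > 0$.

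The main obstacle is the sharp planar refinement step. Pikhurko's general argument inserts at this stage an absorption inequality designed to work for arbitrary $k$ and $s$, and its pessimism is what injects most of the slack into the final bound; the sharpening requires identifying the extremal lattice polygon that saturates this step, most plausibly a specific thin triangle or a sheared Cayley-like quadrilateral, and verifying that the associated one-parameter optimization produces precisely the value $0.124904$. I would therefore set up this optimization explicitly, check convexity or monotonicity in the single parameter governing how far $w$ lies from the boundary along the critical direction, and solve it numerically, expecting the stated constant to arise as the root of a low-degree polynomial or a transcendental equation whose approximate value matches the number quoted in the lemma. Once that numerical optimum is verified, the lemma follows by inserting $\gamma = 0.124904$ into the combined two-step estimate.
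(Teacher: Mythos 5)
Your proposal is a plan rather than a proof, and the plan points in a direction that does not match where the constant actually comes from. In the paper, $0.124904$ is not the output of a fresh one-parameter optimization over extremal polygons: it is simply the ``guaranteed'' lower bound for Pikhurko's quantity $\beta(2,1)$ read off from \cite[Table~1]{Pik01}, and the shape $\frac{2}{0.124904}-1$ arises from the transfer inequality ${\rm ca}(w,P')\le 2\,{\rm ca}(w,S)+1$ of \cite[Lemma 3]{Pik01} applied with ${\rm ca}(w,S)\le \frac{1}{0.124904}-1$. Your plan to identify an extremal thin triangle or sheared quadrilateral and ``verify that the associated one-parameter optimization produces precisely the value $0.124904$'' is chasing a number that is not the root of any such clean extremal problem; rederiving $\beta(2,1)$ from scratch would be a substantial computation that you do not carry out, and nothing in your sketch (successive minima of the difference body, a one-dimensional base case propagated across a fiber) is developed far enough to be checked.

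The second, more concrete gap is the missing case analysis. The actual argument takes a triangle $S\subseteq P'$ of maximal area, uses $P'\subseteq (-2)S+(v_0+v_1+v_2)$, and then must split according to whether $S$ is empty, hollow with a lattice point on an edge, or has an interior lattice point. Pikhurko's $\beta(2,1)$ bound only applies in the last case; the first case yields a contradiction with $P'$ having interior lattice points (since $(-2)S+(1,1)$ is hollow when $S$ is unimodular), and the second case needs the classification of hollow polygons plus explicit checks giving ${\rm ca}(w,P')\le 3$. Your proposal never explains how the interior lattice point $w$ is produced when the natural candidate simplex has no interior lattice points, and without handling these degenerate cases the bound cannot be established. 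As written, the proposal identifies the right source (Pikhurko's machinery specialized to the plane) but supplies neither the case split nor any of the numerical verifications, so it does not constitute a proof.
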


\begin{proof}

Let $P'$ be a lattice polygon with interior lattice points. 
Let us choose a triangle $S \subset P'$ of maximal area. 
We may assume that the vertices $v_0, v_1 , v_2$ of $S$ are also vertices of $P'$ and that $P' \subseteq (-2) S + (v_0 + v_1 + v_2)$, 
see \cite{Pik01}. We consider three cases. 

\begin{enumerate}
\item If $S$ has no lattice points except its vertices, then it is unimodular equivalent 
to the unimodular triangle $\convSet{(0,0),(1,0),(0,1)}$. Therefore, $P'$ is contained in $(-2) S+ (1,1)$, which is hollow, a contradiction.  
\item If $S$ is hollow but has a lattice point in the interior of an edge, then by the classification of hollow lattice polygons 
we may assume that $S$ is of one of the following:
\begin{enumerate}
\item $S = \convSet{(0,0),(2,0),(0,2)}$. In this case, by going through the possible lattice subpolygons of $(-2) S + (2,2)$ one checks 
that there always is an interior lattice point $w$ of $P'$ with ${\rm ca}(w,P') \leq 3$.
\item $S = \convSet{(0,0),(k,0),(0,1)}$ for some $k \geq 2$. The facts that $P'$ is not hollow and that $S$ has maximal area imply that 
$P'= \convSet{(0,0),(k,0),(0,1),(k,-1)}$. In this case, $(0,\lfloor k/2 \rfloor)$ is an interior lattice point $w$ of $P'$ with ${\rm ca}(w,P') \leq 2$. 
\end{enumerate}
\item If $S$ has interior lattice points, then we can read off from  \cite[Table~1]{Pik01} (`guaranteed' lower bound for 
$\beta(2,1)$) that there is an interior lattice point $w$ 
with ${\rm ca}(w,S) \leq \frac{1}{0.124904}-1$. Therefore, \cite[Lemma 3]{Pik01} yields that 
${\rm ca}(w,P') \leq 2 \ {\rm ca}(w,S) + 2-1 = \frac{2}{0.124904} - 1 \approx 15.012$. 
\end{enumerate}

\end{proof}

\begin{proof}[Proof of Proposition~\ref{bound}]

Here $d=3$ and $s=1$. We define $\Delta_1 := 0.124903 / 2$, and choose $\Delta_1 < \Delta_2 < 1/3 < \Delta_3 = 1$. Then 
Lemma~\ref{lemma-improved} and the above proof of Theorem~\ref{main2} 
yields $1/(\Delta_1)^3 \approx 4105.55$ as the improved bound on the volume of a hollow lattice $3$-polytope $P$ 
in the case $i=0$.
\end{proof}

\section{Proof of Theorem~\ref{examples}}
\label{examples-proof}

\noindent In the following we describe for each $d\ge4$ a $d$-dimensional lattice polytope
$\Delta(d)_I$ that is \emph{hollow} (no interior lattice point),
is maximal as a hollow lattice polytope (it is not properly contained in a hollow lattice polytope),
but is not maximal as a hollow convex body (it is properly contained in the hollow simplex $\Delta(d)$).

\begin{definition}
For $d\ge3$ let $\Delta(d)$ be the hollow simplex given by
\label{delta-def}
\begin{eqnarray*}
	\Delta(d) &:=& \Big\{x\in\R^d : x_i\ge0 \textrm{ for } 1\le i\le d,\\
	          &  & \displaystyle\hphantom{\Big\{x\in\R^d : }
	\frac{x_1}2 + \frac{x_2}4 + \dots + \frac{x_{d-3}}{2^{d-3}} + \frac{x_{d-2}}{2^{d-2}} +
	\frac{x_{d-1}}{2^{d-1}-1} + \frac{x_d}{2^{d-1}+1+\alpha}\ \ \le\ \ 1\ \Big\}\\
	          & =& \conv
	\{ 0, 2e_1, 4 e_2, \dots, 2^{d-2} e_{d-2} , (2^{d-1}-1)e_{d-1}, (2^{d-1}+1+\alpha) e_d \},
\end{eqnarray*}
where $\alpha:=\frac1{2^{d-2}-1}>0$. Note that this is \emph{not} a lattice simplex for $d>3$, i.e.\ 
when $\alpha<1$. 
\end{definition}

\begin{theorem}\label{thm:Delta_I}
	For $d\ge4$ the integer hull $\Delta(d)_I:=\conv(\Delta(d)\cap\Z^d)$ is given by
\begin{eqnarray*}
  \Delta(d)_I & =& \Big\{x\in\R^d : x_i\ge0 \textrm{ for } 1\le i\le d,\\
              &  & \displaystyle\hphantom{\Big\{x\in\R^d : }  
	\frac{x_1}2 + \frac{x_2}4 + \dots + \frac{x_{d-3}}{2^{d-3}} + \frac{x_{d-2}}{2^{d-2}} +
	\frac{x_{d-1}}{2^{d-1}-1} + \frac{x_d}{2^{d-1}+1+\alpha}\ \ \le\ \ 1,\\
	      	  &  & \displaystyle\hphantom{\Big\{x\in\R^d : }
	\frac{x_1}2 + \frac{x_2}4 + \dots + \frac{x_{d-3}}{2^{d-3}} +
	\frac{2x_{d-2}+ x_{d-1} + x_d}{2^{d-1}+1} \qquad\ \ \le\ \ 1\ \Big\}\\
		          & =& \conv
		\{ 0, 2e_1, 4 e_2, \dots, 2^{d-2} e_{d-2} , (2^{d-1}-1)e_{d-1},\\
		      &  & \hphantom{\conv\{} 
		   (2^{d-1}+1) e_d, e_{d-1} + 2^{d-1} e_d, e_{d-2} + (2^{d-1}-1) e_d  \}.
\end{eqnarray*}	
This polytope is a $(d-3)$-fold pyramid over a triangular prism.

This lattice polytope is hollow. It is not contained properly in any larger $d$-dimensional
lattice polytope, but it is properly contained in $\Delta(d)$.
\end{theorem}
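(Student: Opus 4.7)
The plan is to proceed in four stages: (i) verify the two explicit descriptions of $\Delta(d)_I$ agree and that the listed vertices are lattice points of $\Delta(d)$; (ii) show every lattice point of $\Delta(d)$ lies in the polytope of (i), so $\Delta(d)_I=\conv(\Delta(d)\cap\Z^d)$; (iii) establish the combinatorial pyramid-over-prism structure; (iv) verify hollowness, maximality as a lattice polytope, and proper containment in $\Delta(d)$.

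For (i), the key numerical identity, obtained by substituting $\alpha=1/(2^{d-2}-1)$ and clearing denominators, is
\[\frac{1}{2^{d-1}-1}+\frac{1}{2^{d-1}+1+\alpha}=\frac{1}{2^{d-2}}.\]
This shows that $e_{d-1}+2^{d-1}e_d$ and $e_{d-2}+(2^{d-1}-1)e_d$ saturate the first facet of $\Delta(d)$, while $(2^{d-1}+1)e_d$ saturates only the second. I then check directly that each of the $d+3$ listed vertices satisfies both nontrivial inequalities (with equality exactly where expected), confirming that the H- and V-descriptions coincide.

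For (ii), the complement $\Delta(d)\setminus\Delta(d)_I$ is the ``cap'' on which the second inequality is strictly violated. Restricting to $x_1=\cdots=x_{d-3}=0$ (the general case only tightens the first inequality), the first inequality together with the strict second inequality forces $x_{d-2},x_{d-1}\in\{0,1\}$, so a short case analysis on the pair $(x_{d-2},x_{d-1})$ rules out stray lattice points: in each case either no integer $x_d$ satisfies both constraints, or the unique solution saturates both inequalities and is one of the three new vertices of $\Delta(d)_I$. For (iii), $\Delta(d)_I$ is a pyramid with apex $0$ over its first facet (which contains the other $d+2$ vertices); iterating along $2e_1,4e_2,\ldots,2^{d-4}e_{d-4}$ peels $d-3$ apexes and leaves a $3$-dimensional polytope on the six vertices $2^{d-3}e_{d-3},\,2^{d-2}e_{d-2},\,(2^{d-1}-1)e_{d-1}$ and $(2^{d-1}+1)e_d,\,e_{d-1}+2^{d-1}e_d,\,e_{d-2}+(2^{d-1}-1)e_d$, which is identified as a triangular prism by exhibiting these two triangular faces and the three edges matching them up.

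For (iv), hollowness follows by summing the identity of (i) with the geometric series $\sum_{j=1}^{d-2}1/2^j=1-1/2^{d-2}$ to obtain $\sum_j 1/c_j=1$, so any lattice point with all $x_i\ge1$ lies on, not inside, the first facet. Maximality as a hollow lattice polytope rests on the observation that every facet of $\Delta(d)_I$ \emph{other than the second} contains a lattice point in its relative interior: $(1,\ldots,1)$ lies in the relative interior of the first facet, and $(1,\ldots,1,0,1,\ldots,1)$ (with $0$ in slot~$i$) lies in the relative interior of the coordinate facet $\{x_i=0\}$; both checks use the identities of (i). By (ii), any lattice point $p\notin\Delta(d)_I$ lies outside $\Delta(d)$ and so violates some facet of $\Delta(d)_I$ other than the second; the corresponding relative-interior lattice point then becomes an interior lattice point of $\conv(\Delta(d)_I\cup\{p\})$. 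Proper containment $\Delta(d)_I\subsetneq\Delta(d)$ is witnessed by the vertex $(2^{d-1}+1+\alpha)e_d$ of $\Delta(d)$, which violates the second inequality. The main obstacles I anticipate are the bookkeeping in (ii)—excluding stray integer points in the cap—and verifying that a relative-interior lattice point exists in \emph{every} coordinate facet needed for (iv).
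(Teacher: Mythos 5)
Your overall architecture matches the paper's (equivalence of the two descriptions, then the integer-hull computation, then the pyramid structure, then maximality via relative-interior lattice points in all facets except the cut facet), and your steps (i) and (iv) are essentially the paper's argument. But step (ii), which is the crux, has a genuine gap: the reduction to $x_1=\cdots=x_{d-3}=0$ is not valid as justified. Writing $T:=\sum_{i=1}^{d-3}x_i/2^i$, both constraints contain the same term $T$, so a point of the ``cap'' with $T>0$ must satisfy the first inequality with right-hand side $1-T$ \emph{and} violate the second with right-hand side $1-T$; setting $T=0$ tightens the violated inequality (makes the cap fiber smaller), so a stray lattice point with some $x_i\ge1$, $i\le d-3$, is not detected by your restricted case analysis. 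You would have to run the analysis for every admissible value of $T$, not just $T=0$. The paper avoids this entirely with a one-line linear-combination trick: multiply inequality (1) by $-(2^{d-1}+1+\alpha)$, multiply the violated inequality by $2^{d-1}+1$ and use integrality to strengthen the strict bound $>2^{d-1}+1$ to $\ge 2^{d-1}+1+2^{-(d-3)}$ (the left-hand side is a multiple of $2^{-(d-3)}$ on integer points), then add; the result has only nonpositive coefficients on the left but strictly positive right-hand side $2^{-(d-3)}-\alpha>0$ for $d\ge4$, contradicting $x_i\ge0$. This handles all coordinates uniformly and is where the precise choice $\alpha=1/(2^{d-2}-1)$ is actually used.

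Step (iii) also contains an error: $\Delta(d)_I$ is \emph{not} a pyramid with apex $0$. The vertex $(2^{d-1}+1)e_d$ does not lie on the facet $\{(1)=1\}$ (it lies on the cut facet instead), and no single facet contains all $d+2$ vertices other than $0$; indeed $0$ lies on only $d$ of the $d+2$ facets, so it cannot be a pyramid apex. The correct apexes are $2e_1,4e_2,\dots,2^{d-3}e_{d-3}$ (each of these lies on every facet except one coordinate hyperplane), and the triangular prism is spanned by the remaining six vertices \emph{including} $0$, namely $0$, $2^{d-2}e_{d-2}$, $(2^{d-1}-1)e_{d-1}$, $(2^{d-1}+1)e_d$, $e_{d-1}+2^{d-1}e_d$, $e_{d-2}+(2^{d-1}-1)e_d$. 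The paper obtains this by observing that the cutting hyperplane passes through the vertices $2e_1,\dots,2^{d-3}e_{d-3}$ of $\Delta(d)$ and truncates one vertex of the tetrahedron spanned by the other four, turning that tetrahedron into a prism. Your hollowness and maximality arguments in (iv) are fine and coincide with the paper's.
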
 

\begin{proof}
	One first checks that the vertex description and the inequality description
	given in the definition of $\Delta(d)_I$ above are equivalent.
	
	If one takes the inequality system given for $\Delta(d)_I$ as the definition, then
	it is clear that  $\Delta(d)_I$ arises from $\Delta(d)$ by a hyperplane cut
	that cuts off the last, non-integral vertex $(2^{d-1}+1+\alpha) e_d$. The hyperplane cut 
	goes through the $d-3$ vertices $2e_1, 4 e_2, \dots, 2^{d-3} e_{d-3}$,
	while it cuts trough the tetrahedron spanned by the other $4$ vertices.
	This shows that $\Delta(d)_I$ is a $(d-3)$-fold pyramid over a tetrahedron
	with a vertex cut off, that is, a triangular prism.
	
	Using this combinatorial description one easily checks that the
	vertices of this polytope $\Delta(d)_I$ are indeed as listed in Theorem~\ref{thm:Delta_I}; thus, in particular,
	$\Delta(d)_I$ is a lattice polytope.

\medskip
	
	Next we check that $\Delta(d)_I$ is indeed the full integer hull of $\Delta(d)$,
	that is, that there are no integer points that would satisfy
\begin{eqnarray}
	   && x_i\ge0 \textrm{\quad for } 1\le i\le d,\label{eqn:0}\\[2pt] 
	   && \displaystyle  
		\frac{x_1}2 + \frac{x_2}4 + \dots + \frac{x_{d-3}}{2^{d-3}} + \frac{x_{d-2}}{2^{d-2}} +
		\frac{x_{d-1}}{2^{d-1}-1} + \frac{x_d}{2^{d-1}+1+\alpha}\ \ \le\ \ 1, \label{eqn:1}\\
		      	  &  & \displaystyle
		\frac{x_1}2 + \frac{x_2}4 + \dots + \frac{x_{d-3}}{2^{d-3}} +
		\frac{2x_{d-2}\quad+\quad x_{d-1} \quad+\quad x_d}{2^{d-1}+1} \qquad\ \ >\ \ 1.\label{eqn:2}
\end{eqnarray}
	For the proof we rewrite the last two inequalities. We multiply (\ref{eqn:1}) by $-(2^{d-1}+1+\alpha)$ as
\begin{equation}
  - \sum_{i=1}^{d-3}(2^{d-1}+1+\alpha)\frac{x_i}{2^i}
  - (2^{d-1}+1+\alpha)\frac{x_{d-2}}{2^{d-2}}
  - (2^{d-1}+1+\alpha)\frac{x_{d-1}}{2^{d-1}-1}
  - x_d\ \ \ge\ \ -(2^{d-1}+1+\alpha) \label{eqn:3}
\end{equation}
    and we multiply the strict inequality (\ref{eqn:2}) by $2^{d-1}+1$,
\[
    \sum_{i=1}^{d-3}(2^{d-1}+1)\frac{x_i}{2^i}
  + 2 x_{d-2} + x_{d-1} + x_d\ \ >\ \ 2^{d-1}+1,   
\] 
    and then convert it into the equivalent (with respect to integer solvability)
    nonstrict inequality
\begin{equation}
    \sum_{i=1}^{d-3}(2^{d-1}+1)\frac{x_i}{2^i}
  + 2 x_{d-2} + x_{d-1} + x_d\ \ \ge\ \ 2^{d-1}+1+\frac1{2^{d-3}}. \label{eqn:4}  
\end{equation}
	Adding the two inequalities (\ref{eqn:3}) and (\ref{eqn:4}) we obtain
\[
	     \sum_{i=1}^{d-3} (-\alpha)\frac{x_i}{2^i} 
	   + \frac{-1-\alpha}{2^{d-2}} x_{d-2} + \frac{-2-\alpha}{2^{d-1}-1} x_{d-1} \ \ \ge\ \ \frac1{2^{d-3}}-\alpha.	  
\]
Clearly, for $d\ge4$, that is when 
$\frac1{2^{d-3}}-\alpha=\frac1{2^{d-3}}-\frac1{2^{d-2}-1}>0 $, this cannot be satisfied while $x_1,\dots,x_{d-1}$ are nonnegative, by~(\ref{eqn:0}).	
(The inequality ``$x_d\ge0$'' is not needed for this.)
\medskip

	Finally, we show that $\Delta(d)_I$ is maximal as a hollow lattice polytope.
	For this, we check that out of the $d+2$ facets the first $d+1$ contain
	relatively interior points, which are $\mathbbm{1}-e_1,\mathbbm{1}-e_2,\dots,\mathbbm{1}-e_d$ for the first
	$d$ of them and $\mathbbm{1}$ (denoting the all-ones vector) for the next to last one.
	If an additional vertex were added to $\Delta(d)_I$ that is \emph{beyond} any of these (in the sense of Grünbaum
	\cite{Gr1-2} \cite{Z35}),
	then this automatically results in an interior lattice point. Thus any additional vertex
	must be beyond the last facet, but beneath all the other facets (or on their hyperplanes).
	However, there is no such integral point, because the first $d+1$ inequalities
	define $\Delta(d)$, and all integral points of this simplex lie in~$\Delta(d)_I$.
\end{proof} 

\begin{small}
\subsubsection*{Acknowledgements.}
The simplices $\Delta(d)$ with $\alpha=0$ appear in the diploma thesis \cite{dipl-Myrach} 
as examples of simplices that are maximal as hollow lattice simplices,
but not as hollow convex bodies; note that lattice simplices are maximal hollow lattice polytopes 
if and only if they are maximal hollow convex bodies. 
Thanks to Gregor Myrach for valuable discussions and to Jaron Treutlein for helpful comments. 
Lattice point computations using \texttt{polymake} \cite{polymake_lattice} were crucial help for Section~\ref{examples-proof}. 
This paper was finished when the first author stayed at Institut Mittag-Leffler during the program ``Algebraic Geometry with a view towards applications''. 
The second author was supported by DFG, Research Training Group ``Methods for Discrete Structures''. We thank the anonymous referee for helpful suggestions.
\end{small}

\bibliographystyle{siam}

\end{document}